\theoremstyle{definition}
\newtheorem{definition}{Definition}
\theoremstyle{definition}
\newtheorem{example}{Example}
\theoremstyle{definition}
\newtheorem{remark}{Remark}
\theoremstyle{plain}
\newtheorem{theorem}{Theorem}
\theoremstyle{plain}
\newtheorem{corollary}{Corollary}
\theoremstyle{plain}
\newtheorem{lemma}{Lemma}
\newcommand{\Span}[1]{\langle #1 \rangle}
\newcommand{\vf}[1]{\mathcal{D}(#1)}
\title{Projectable Lie algebras of vector fields in 3D}
\author{Eivind Schneider}
\begin{document}

\maketitle

\begin{abstract}
Starting with Lie's classification of finite-dimensional transitive Lie algebras of vector fields on $\mathbb C^2$ we construct Lie algebras of vector fields on the bundle $\mathbb C^2 \times \mathbb C$ by lifting the Lie algebras from the base. There are essentially three types of transitive lifts and we compute all of them for the Lie algebras from Lie's classification. The simplest type of lift is encoded by Lie algebra cohomology. 
\end{abstract}

\section{Introduction}
A fundamental question in differential geometry is to determine which transitive Lie group actions exist on a manifold. Sophus Lie considered this to be an important problem, in particular due to its applications in the symmetry theory of PDEs. 
In \cite{Lie1880} (see also \cite{Transformationsgruppen3}) he gave a local classification of finite-dimensional transitive Lie algebras of analytic vector fields on $\mathbb C$ and $\mathbb C^2$.

Lie never published a complete list of finite-dimensional Lie algebras of vector fields on $\mathbb C^3$, but he did classify primitive Lie algebras of vector fields on $\mathbb C^3$, those without an invariant foliation, which he considered to be the most important ones and also some special imprimitive Lie algebras of vector fields.

Lie algebras of vector fields on $\mathbb C^3$ preserving a one-dimensional foliation are locally equivalent to projectable Lie algebras of vector fields on the total space of the fiber bundle $\pi \colon \mathbb C^2 \times \mathbb C \to \mathbb C^2$. Finding such Lie algebras amounts to extending Lie algebras of vector fields on the base (where they have been classified) to the total space. For the primitive Lie algebras of vector fields on the plane, this was completed by Lie \cite{Transformationsgruppen3}. Amaldi continued Lie's work by extending the imprimitive Lie algebras to three-dimensional space \cite{Amaldi1, Amaldi2} (see also \cite{Hillgarter1}). 
Nonsolvable Lie algebras of vector fields on $\mathbb C^3$ were classified in \cite{Doubrov}. It was also showed there that a complete classification of finite-dimensional solvable Lie algebras of vector fields on $\mathbb C^3$ is hopeless, since it contains the subproblem of classifying left ideals of finite codimension in the universal enveloping algebra $U(\mathfrak g)$ for the two-dimensional Lie algebras $\mathfrak g$ which is known to be a hard algebraic problem.

In this paper we consider Lie algebras of vector fields on the plane from Lie's list, and extend them to the total space $\mathbb C^2 \times \mathbb C$. In order to avoid the issues discussed in \cite{Doubrov} we only consider extensions that are of the same dimension as the original Lie algebra. The resulting list of Lie algebras has intersections with \cite{Transformationsgruppen3}, \cite{Amaldi1, Amaldi2} and \cite{Doubrov}, but it also contains some new solvable Lie algebras of vector fields in three-dimensional space. 

We start in section \ref{classification2D} by reviewing the classification of Lie algebras of vector fields on $\mathbb C^2$, which will be our starting point. The lifting procedure is explained in section \ref{lifts}. We show that the lifts can be divided into three types, depending on how they act on the fibers of $\pi$. In section \ref{list} we give a complete list of the lifted Lie algebras of vector fields, which is the main result of this paper. The relation between the simplest type of lift and Lie algebra cohomology is explained in section \ref{cohomology}. 

\section{Classification of Lie algebra actions on $\mathbb C^2$}\label{classification2D}
Two Lie algebras $\mathfrak g_1 \subset \vf{M_1}, \mathfrak g_2 \subset \vf{M_2}$ of vector fields are locally equivalent if there exist open subsets $U_i \subset M_i$ and a diffeomorphism $f \colon U_1 \to U_2$ with the property $df(\mathfrak g_1|_{U_1})=\mathfrak g_2|_{U_1}$. Recall that $\mathfrak g$ is transitive if $\mathfrak g|_p =T_p M$ at all points $p \in M$.

The classification of Lie algebras of vector fields on $\mathbb C$ and $\mathbb C^2$ is due to Lie \cite{Lie1880} (see \cite{AH} for English translation). There are up to local equivalence only three finite-dimensional transitive Lie algebras of vector fields on $\mathbb C$ and they correspond to the metric, affine and projective transformations: 
\begin{equation}
\Span{\partial_u}, \qquad \Span{\partial_u, u \partial_u}, \qquad \Span{\partial_u, u \partial_u, u^2 \partial_u} \label{classification1D}
\end{equation} 
On $\mathbb C^2$ any finite-dimensional transitive Lie algebra of analytic vector fields is locally equivalent to one of the following:
	\begin{align*}
	&\textbf{Primitive}\\ 
	\mathfrak{g}_1 &= \Span{\partial_x, \partial_y, x \partial_x, x \partial_y, y \partial_x, y \partial_y, x^2 \partial_x +xy \partial_y, xy \partial_x +y^2 \partial_y}\\
	\mathfrak{g}_2 &= \Span{\partial_x,\partial_y, x\partial_x, x \partial_y, y\partial_x, y \partial_y} \\
	\mathfrak{g}_3 &= \Span{\partial_x, \partial_y, x \partial_y, y \partial_x, x \partial_x - y\partial_y}\\
	\end{align*}
	\begin{align*}
	&\textbf{Imprimitive}\\ 
	\mathfrak{g}_4 &=\Span{\partial_x, e^{\alpha_i x} \partial_y, xe^{\alpha_i x} \partial_y, ..., x^{m_i-1}e^{\alpha_i x} \partial_y\mid i=1,...,s},\\ &\qquad \text{where } m_i \in \mathbb N\setminus\{0\}, \alpha_i \in \mathbb C,  \sum_{i=1}^s m_i + 1 = r \geq 2\\ 
	\mathfrak{g}_5 &=\Span{\partial_x,y \partial_y,e^{\alpha_i x} \partial_y, xe^{\alpha_i x} \partial_y, ..., x^{m_i-1}e^{\alpha_i x} \partial_y\mid i=1,...,s},\\ &\qquad \text{where } m_i \in \mathbb N\setminus\{0\}, \alpha_i \in \mathbb C,  \sum_{i=1}^s m_i + 2 = r \geq 4 \\
	\mathfrak{g}_6 &= \Span{\partial_x, \partial_y, y \partial_y, y^2 \partial_y}  \\
	\mathfrak{g}_7 &= \Span{\partial_x, \partial_y, x\partial_x, x^2 \partial_x + x \partial_y}\\
	\mathfrak{g}_8 &=  \Span{\partial_x, \partial_y, x \partial_y, ..., x^{r-3} \partial_y, x \partial_x+ \alpha y \partial_y\mid \alpha \in \mathbb C} ,\; r \geq 3\\
	\mathfrak{g}_9 &= \Span{\partial_x, \partial_y, x \partial_y, ..., x^{r-3} \partial_y, x \partial_x + \left( (r-2) y+ x^{r-2}\right) \partial_y },\; r \geq 3\\ 
	\mathfrak{g}_{10} &= \Span{\partial_x, \partial_y, x \partial_y, ..., x^{r-4} \partial_y, x \partial_x, y \partial_y},\; r \geq 4\\
	\mathfrak{g}_{11} &= \Span{\partial_x, x \partial_x, \partial_y, y\partial_y, y^2 \partial_y}\\
		\mathfrak{g}_{12} &= \Span{\partial_x, x \partial_x, x^2 \partial_x, \partial_y, y \partial_y, y^2 \partial_y }\\
	\mathfrak{g}_{13} &= \Span{\partial_x, \partial_y, x \partial_y, ..., x^{r-4} \partial_y, x^2 \partial_x + (r-4) xy \partial_y, x \partial_x + \tfrac{r-4}{2} y \partial_y},\; r \geq 5\\
	\mathfrak{g}_{14} &= \Span{ \partial_x, \partial_y, x \partial_y,..., x^{r-5} \partial_y, y \partial_y, x \partial_x, x^2 \partial_x + (r-5)xy \partial_y}, \;r\geq 6\\ 
	\mathfrak{g}_{15} &= \Span{\partial_x, x\partial_x+\partial_y, x^2 \partial_x + 2x \partial_y }\\ 
	\mathfrak{g}_{16} &= \Span{\partial_x, x \partial_x- y \partial_y, x^2 \partial_x+(1-2xy) \partial_y} \\ 
	\end{align*}

In the list above (which is based on the one in \cite{Onishchik}), and throughout the paper, $r$ denotes the dimension of the Lie algebra. Our $\mathfrak g_{16}$ is by $y\mapsto \frac{1}{y-x}$ locally equivalent to $\Span{\partial_x + \partial_y, x \partial_x + y \partial_y,x^2 \partial_x + y^2 \partial_y}$, which often appears in these lists of Lie algebras of vector fields on the plane but has a singular orbit $y-x=0$. 

We also refer to \cite{Transformationsgruppen3,Campbell,Draisma,Gorbatsevich} which treat transitive Lie algebras of vector fields on the plane.

\section{Lifts of Lie algebras in $\vf{\mathbb C^2}$}\label{lifts}
In this section we describe how we lift the Lie algebras of vector fields from the base to the total space of $\pi \colon \mathbb C^2 \times \mathbb C \to \mathbb C^2$.

\begin{definition}
	Let $\mathfrak g \subset \vf{\mathbb C^2}$ be a Lie algebra of vector fields on $\mathbb C^2$, and let $\hat{\mathfrak{g}} \subset \vf{\mathbb C^2 \times \mathbb C}$ be a projectable Lie algebra satisfying $d \pi(\hat{\mathfrak{g}})= \mathfrak g$. 
	The Lie algebra $\hat{\mathfrak{g}}$ is a lift of $\mathfrak g$ (on the bundle $\pi$) if $\ker (d\pi|_{\hat{\mathfrak{g}}})=\{0\}$ .  
\end{definition}

For practical purposes we reformulate this in coordinates. Throughout the paper $(x,y,u)$ will be coordinates on $\mathbb C^2\times \mathbb C$. If $X_i=a_i(x,y) \partial_x+b_i(x,y) \partial_y$ form a basis for $\mathfrak g \subset \vf{\mathbb C^2}$, then a lift $\hat{\mathfrak g}$ of $\mathfrak g$ on the bundle $\pi$ is spanned by vector fields of the form $\hat X_i=a_i(x,y) \partial_x+b_i(x,y) \partial_y+f_i(x,y,u) \partial_u$. The functions $f_i$ are subject to differential constraints coming from the commutation relations of $\mathfrak g$.
Finding lifts of $\mathfrak g$ amounts to solving these differential equations.

\subsection{Three types of lifts}
The fibers of $\pi$ are one-dimensional and, as is common in these type of calculations, we will use the classification of Lie algebras of vector fields on the line to simplify our calculations. Let $\mathfrak g$ be a finite-dimensional transitive Lie algebra of vector fields on $\mathbb C^2$ and $\hat{\mathfrak g}$ a transitive lift. For $p \in \mathbb C^2 \times \mathbb C$, let $a=\pi(p)$ be the projection of $p$ and let $\mathfrak{st}_a \subset \mathfrak g$ be the stabilizer of $a\in \mathbb C^2$. Denote by $\hat{\mathfrak{st}_a} \subset \hat{\mathfrak{g}}$  the lift of $\mathfrak{st}_a$, i.e.  $d\pi(\hat{\mathfrak{st}_a})=\mathfrak{st}_a$. The Lie algebra $\hat{\mathfrak{st}_a}$ preserves the fiber $F_a$ over $a$, and thus induces a Lie algebra of vector fields on $F_a$ by restriction to the fiber. Denote the corresponding Lie algebra homomorphism by \[\varphi_a \colon \hat{\mathfrak{st}_a}\to \vf{F_a}.\]
In general this will not be injective, and it is clear that as abstract Lie algebras $\varphi_a(\hat{\mathfrak{st}_a})$ is isomorphic to $\mathfrak h_a= \hat{\mathfrak{st}_a}/\ker(\varphi_a)$.

Since $\hat{\mathfrak{g}}$ is transitive, the Lie algebra $\varphi_a(\hat{\mathfrak{st}_a})$ is a transitive Lie algebra on the one-dimensional fiber $F_a$, and therefore it must be locally equivalent to one of the three Lie algebras (\ref{classification1D}).
Transitivity of $\hat{\mathfrak g}$ also implies that for any two points $a,b \in \mathbb C^2$, the Lie algebras $\varphi_a(\hat{\mathfrak{st}_{a}}), \varphi_b(\hat{\mathfrak{st}_{b}})$ of vector fields are locally equivalent. Since the Lie algebra structure of $\mathfrak h_a$ is independent of the point $a$, it will be convenient to define $\mathfrak h$ as the abstract Lie algebra isomorphic to $\mathfrak h_a$. Thus $\dim \mathfrak h$ is equal to 1, 2 or 3, which allows us to split the transitive lifts into three distinct types. 

The main goal of this section is to show that we can change coordinates so that $\varphi_a(\hat{\mathfrak{st}_a})$ has one of the three normal forms from (\ref{classification1D}), on every fiber simultaneously. Before we prove this we make the following observation.

\begin{lemma} \label{lemma}
	Let $\mathfrak g \subset \vf{\mathbb C^2}$ be a transitive Lie algebra of vector fields, and let $a \in \mathbb C^2$ be an arbitrary point. Then there exists a locally transitive two-dimensional subalgebra  $\mathfrak h \subset \mathfrak{g}$, and a local coordinate chart $(U,(x,y))$ centered at $a$ such that $\mathfrak h = \Span{X_1,X_2}$ where  $X_1= \partial_x$ and either $X_2=\partial_y$ or $X_2= x \partial_x+\partial_y$. 
\end{lemma}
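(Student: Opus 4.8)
The plan is to split the statement into two independent parts: an \emph{existence} part, producing a two-dimensional subalgebra $\mathfrak h\subset\mathfrak g$ that is transitive at $a$, and a \emph{normalization} part, turning such an $\mathfrak h$ into one of the two stated forms. The first reduction I would make is that $\mathfrak h=\Span{X_1,X_2}$ is locally transitive near $a$ iff $X_1|_a,X_2|_a$ are linearly independent, i.e. iff $\mathfrak h\cap\mathfrak{st}_a=\{0\}$; by dimension count the evaluation map $\mathfrak h\to T_a\mathbb C^2$ is then an isomorphism and transitivity is an open condition. The second reduction is purely algebraic: over $\mathbb C$ every two-dimensional Lie algebra is either abelian or isomorphic to the affine algebra $[X_1,X_2]=X_1$. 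These two isomorphism types are realized \emph{exactly} by the two target normal forms, since $[\partial_x,\partial_y]=0$ while $[\partial_x,\,x\partial_x+\partial_y]=\partial_x$.

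For the existence of $\mathfrak h$, I would pick $X_1\in\mathfrak g$ with $X_1|_a\neq 0$, straighten it to $\partial_x$ with $a$ at the origin, and search for a suitable $X_2$. The key reformulation is that $\Span{\partial_x,X_2}$ is a subalgebra iff $[\partial_x,X_2]\in\Span{\partial_x,X_2}$, which says precisely that the image of $X_2$ in the quotient $\mathfrak g/\Span{\partial_x}$ is an eigenvector of the operator $\bar N$ induced by $\operatorname{ad}_{\partial_x}$. Over $\mathbb C$ such eigenvectors always exist, and transversality of $X_2$ (nonzero $\partial_y$-component at $a$) is well defined on these classes. So the task becomes: find an eigenvector of $\bar N$ that is transverse. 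If its eigenvalue is $0$ the resulting $\mathfrak h$ is abelian; otherwise it is affine.

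For the normalization I would treat the two cases separately. In the abelian case $X_1,X_2$ commute and are independent at $a$, so the simultaneous rectification (flow-box) theorem for commuting fields gives coordinates centered at $a$ with $X_1=\partial_x$, $X_2=\partial_y$. In the affine case I choose the basis so that $X_1$ spans the derived subalgebra and $[X_1,X_2]=X_1$; transitivity forces $X_1|_a\neq0$, so I straighten $X_1=\partial_x$, and the relation $[\partial_x,X_2]=\partial_x$ then forces $X_2=(x+p(y))\partial_x+q(y)\partial_y$ with $q(a)\neq0$. A fiber-preserving change $\tilde x=x+\phi(y)$, $\tilde y=\psi(y)$ keeps $X_1=\partial_{\tilde x}$, and choosing $\phi,\psi$ to solve the first-order linear ODEs $q\phi'-\phi=-p$ and $q\psi'=1$ (solvable near $a$ because $q(a)\neq0$, normalized so the chart stays centered at $a$) brings $X_2$ to the form $\tilde x\,\partial_{\tilde x}+\partial_{\tilde y}$.

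The main obstacle is the existence step, and specifically guaranteeing that a \emph{transverse} eigenvector of $\bar N$ exists: a poorly chosen $X_1$ can make $\operatorname{ad}_{X_1}$ purely nilpotent with all honest eigenvectors lying in the non-transverse direction (this already happens for $\Span{\partial_x,x\partial_x+\partial_y}$ and for $\mathfrak g_{15}\cong\mathfrak{sl}_2$ with $X_1=\partial_x$). I expect to resolve this using transitivity together with the isotropy representation of $\mathfrak{st}_a$ on $T_a\mathbb C^2$: bracketing $\partial_x$ against $\mathfrak{st}_a$ produces vectors whose transversality at $a$ is governed by that representation, which is what prevents all eigenvectors of $\bar N$ from collapsing into the isotropy direction. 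As a fallback, since the transitive Lie algebras on $\mathbb C^2$ are completely listed above, the existence of a transverse two-dimensional subalgebra can also be checked case by case against $\mathfrak g_1,\dots,\mathfrak g_{16}$.
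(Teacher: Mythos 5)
Your normalization half is correct and complete (the flow-box argument in the abelian case and the ODE argument for $q\phi'-\phi=-p$, $q\psi'=1$ in the affine case), and your reformulation of the existence problem via eigenvectors of the induced operator $\bar N$ on $\mathfrak g/\Span{X_1}$ is also correct. The genuine gap is that the existence half --- which is the entire content of the lemma, since the paper dismisses normalization as well known --- is never proved: you name the transverse-eigenvector problem as ``the main obstacle'' and then offer only an expectation, and that expectation is false as stated. Concretely, take $\mathfrak g_{16}=\Span{\partial_x,\ x\partial_x-y\partial_y,\ x^2\partial_x+(1-2xy)\partial_y}\simeq sl_2$, the point $a=(0,0)$ (where $\mathfrak g_{16}$ is transitive), and $X_1=\partial_x$. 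This $\partial_x$ is a nilpotent element of $sl_2$, so it lies in a unique Borel subalgebra, namely $\Span{\partial_x,\ x\partial_x-y\partial_y}$, and Borels are the only two-dimensional subalgebras of $sl_2$; equivalently, the only eigenline of $\bar N$ is the class of $x\partial_x-y\partial_y$, which is \emph{not} transverse at $a$ (both fields are proportional to $\partial_x$ along $y=0$). Hence no isotropy-representation argument can prevent the collapse for this $X_1$; the only cure is to change $X_1$ itself (e.g.\ to the nilpotent $(1+x)^2\partial_x+(1-2y(1+x))\partial_y$, whose Borel is transitive at the origin), and your proposal contains no mechanism for making, or even detecting the need for, such a choice. (Your two parenthetical examples of the obstacle are not actually instances of it: for $\mathfrak g_{15}$ with $X_1=\partial_x$ the class of $x\partial_x+\partial_y$ \emph{is} a transverse eigenvector of $\bar N$; you have conflated eigenvectors of $\operatorname{ad}_{X_1}$ on $\mathfrak g$ with eigenvectors on the quotient.) Your fallback --- case-by-case inspection of $\mathfrak g_1,\dots,\mathfrak g_{16}$ --- is legitimate, and is in fact the paper's first-line justification (``apparent from the list''), but you do not carry it out, and it is not quite mechanical: the lemma demands transitivity at an \emph{arbitrary} prescribed point, and the same example shows the obvious subalgebra of $\mathfrak g_{16}$ failing along $y=0$, so the check requires either a conjugated Borel or an appeal to local homogeneity of transitive actions to reduce to a generic point.

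For comparison, the paper's outlined proof evades your obstacle by structure theory rather than by a clever choice of $X_1$. It takes the Levi decomposition $\mathfrak g=\mathfrak s\ltimes\mathfrak r$ and splits into cases. If the radical $\mathfrak r$ is locally transitive, the eigenvector argument is run not on $\mathfrak g/\Span{X_1}$ but on the last nonzero term $\mathfrak r_k$ of the derived series of $\mathfrak r$: either $\mathfrak r_k$ is locally transitive --- then, being abelian, any two of its elements independent at $a$ span the required subalgebra --- or all of $\mathfrak r_k$ is tangent to a one-dimensional foliation, and one takes $X_i\in\mathfrak r_i$ ($i<k$) transversal to this foliation together with an eigenvector $X_k$ of $\operatorname{ad}_{X_i}$ acting on the \emph{ideal} $\mathfrak r_k$; then $\Span{X_i,X_k}$ is automatically a subalgebra, and transversality of the pair comes from the transversality of $X_i$ to the foliation (modulo the implicit point that $X_k$ can be chosen nonvanishing at $a$). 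If instead $\mathfrak s$ is transitive, the paper quotes the classification of transitive semisimple actions on surfaces ($sl_2$, $sl_2\oplus sl_2$, $sl_3$), each of which admits a transitive two-dimensional subalgebra; and if neither $\mathfrak s$ nor $\mathfrak r$ is transitive, then $\mathfrak g=\mathfrak s\oplus\mathfrak r$ with $\mathfrak s\simeq\Span{\partial_x,x\partial_x,x^2\partial_x}$ and $\mathfrak r$ spanned by fields $b(y)\partial_y$, which again yields an abelian transitive subalgebra. Some such structural input --- or an honestly executed list check including the homogeneity reduction --- is what your proposal is missing.
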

\begin{proof}
	This is apparent from the list in section \ref{classification2D}, but we also outline an independent argument.
	It is well known that a two-dimensional locally transitive Lie subalgebra can be brought to one of the above forms, so we only need to show that such exists.
	
	Let $\mathfrak g = \mathfrak s \ltimes \mathfrak r$ be the Levi-decomposition of $\mathfrak g$. Assume first that $\mathfrak r$ is a locally transitive Lie subalgebra and let \[ \mathfrak r \supset \mathfrak r_1 \supset \mathfrak r_2 \supset \cdots \supset \mathfrak r_k  \supset \mathfrak r_{k+1}=\{0\}.   \] 
	be its derived series. If $\mathfrak r_k$ is locally transitive, it contains an (abelian) two-dimensional transitive subalgebra and we are done. If $\mathfrak r_k$ is not locally transitive, then we take a vector field $X_i \in \mathfrak r_i$ for some $i<k$ which is transversal to those of $\mathfrak r_k$. Since $[\mathfrak r_i, \mathfrak r_k] \subset \mathfrak r_k$, we have a map $\text{ad}_{X_i} \colon \mathfrak r_k \to \mathfrak r_k$. Let $X_k \in \mathfrak r_k$ be an eigenvector of $\text{ad}_{X_i}$. Then $X_i$ and $X_k$ span a two-dimensional locally transitive subalgebra of $\mathfrak g$. 

	If $\mathfrak s$ is a transitive subalgebra, then $\mathfrak s$ is locally equivalent to the standard realization on $\mathbb C^2$ of either $sl_2$, $sl_2 \oplus sl_2$ or $sl_3$, all of which have a locally transitive two-dimensional Lie subalgebra. 
	
	If neither $\mathfrak s$ nor $\mathfrak r$ is locally transitive they give transversal one-dimensional foliations, and $\mathfrak s$ is locally equivalent to the realization $\Span{\partial_x, x \partial_x, x^2 \partial_x}$ of  $sl_2$ on $\mathbb C$ while $\mathfrak r$ is spanned by vector fields of the form $b_i(x,y)\partial_y$. Since $\mathfrak r$ is finite-dimensional we get $(b_i)_x=0$. Therefore $\mathfrak g=\mathfrak s\oplus \mathfrak r$, and there exists an abelian locally transitive subalgebra.  
\end{proof}

\begin{example} \label{simplify}
	Let $X_1= \partial_x$ and $X_2= \partial_y$ be  vector fields on $\mathbb C^2$ and consider the general lift $\hat{X}_1 = \partial_x+f_1(x,y,u) \partial_u, \hat X_2 = \partial_y+f_2(x,y,u) \partial_u$. We may change coordinates $ u\mapsto A(x,y,u)$ so that $f_1\equiv 0$. This amounts to solving $\hat X_1 (A)=A_x+f_1 A_u=0$ with $A_u \neq 0$, which can be done locally around any point.
	The commutation relation $[\hat X_1, \hat X_2]=(f_2)_x \partial_u=0$ implies that $f_2$ is independent of $x$. Thus, in the same way as above, we may change coordinates $u \mapsto B(y,u)$ so that $f_2 \equiv 0$. A similar argument works if $X_2= x \partial_x+\partial_y$. 
\end{example}
The previous example is both simple and useful. Since all our Lie algebras of vector fields on $\mathbb C^2$ contain these Lie algebras as subalgebras, we can always transform our lifts to a simpler form by changing coordinates in this way. We apply this idea in the proof of the following theorem.

\begin{theorem}\label{main}
Let $\mathfrak g = \Span{X_1,...,X_r}$ be a transitive Lie algebra of vector fields on $\mathbb C^2$ and let $\hat{\mathfrak g}=\Span{\hat{X_1},...,\hat{X_r}}$ be a transitive lift of $\mathfrak g$ on the bundle $\pi$, with $\hat{X_i} = X_i + f_i(x,y,u) \partial_u$. 
	
	Then there exist local coordinates in a neighborhood $U \subset \mathbb C^2 \times \mathbb C$ of any point so that $f_i(x,y,u)=\alpha_i(x,y) + \beta_i(x,y) u+\gamma_i (x,y) u^2$ and $\varphi_a(\hat{\mathfrak{st}_{a}})$ is of normal form (\ref{classification1D}) for every $a\in U$.
\end{theorem}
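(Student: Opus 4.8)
The plan is to trivialise the transversal directions first and then use their flows to show that the fibre algebras $\varphi_a(\hat{\mathfrak{st}}_a)$ are one and the same space of functions of $u$ for every base point, so that a single fibre reparametrisation normalises all of them at once. To set up, I would invoke Lemma \ref{lemma} to fix a chart centred at the given point in which a locally transitive two-dimensional $\mathfrak h = \Span{X_1,X_2}\subset\mathfrak g$ has $X_1=\partial_x$ and $X_2=\partial_y$ (or $X_2=x\partial_x+\partial_y$), and then apply Example \ref{simplify} to change the fibre coordinate so that $f_1\equiv f_2\equiv 0$; thus $\hat X_1=\partial_x$ and $\hat X_2=\partial_y$ (resp. $x\partial_x+\partial_y$) carry no $\partial_u$-component. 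Write $d=\dim\mathfrak h\le 3$ and regard $W_a:=\varphi_a(\hat{\mathfrak{st}}_a)$ as a $d$-dimensional space of functions of $u$, identifying $\varphi_a$ with restriction to the fibre.

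The central step is to prove that $W_a$ does not depend on $a$. Since $\hat X_1,\hat X_2$ have no $\partial_u$-component, the local transformations generated by their flows are of the form $(x,y,u)\mapsto(\,\cdot\,,\,\cdot\,,u)$ — trivial on the fibre coordinate — while their projections generate the locally transitive pseudogroup of $\mathfrak h$ on the base. Given $a$ near the centre $a_0$, I would pick such a transformation $\hat\Phi$ with $\pi\circ\hat\Phi(a_0)=a$. Its pushforward preserves $\hat{\mathfrak g}$ (the flow of any element of $\hat{\mathfrak g}$ acts by $e^{s\,\mathrm{ad}}$) and sends $\hat{\mathfrak{st}}_{a_0}$ to $\hat{\mathfrak{st}}_a$; because $\hat\Phi$ fixes $u$, the fibre restriction of a pushed-forward stabiliser element is the same function of $u$ as the original. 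Hence $W_{a_0}\subseteq W_a$, and equality follows since $\dim W_a=\dim\mathfrak h$ is constant. So $W_a=W=:\Span{h_1,\dots,h_d}$ for all $a$.

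The conclusion then follows quickly. At each $a$, transversality of $X_1,X_2$ lets me write $X_i=c_1X_1+c_2X_2+X''$ with $X''\in\mathfrak{st}_a$, whence $\hat X_i=c_1\hat X_1+c_2\hat X_2+\widehat{X''}$ and, using $f_1=f_2=0$, the fibre value $f_i(a,\cdot)$ equals $\varphi_a(\widehat{X''})\in W_a=W$; therefore $f_i(x,y,u)=\sum_m p^i_m(x,y)\,h_m(u)$ for analytic $p^i_m$. As $W$ is transitive on the one-dimensional fibre, Lie's classification (\ref{classification1D}) supplies a reparametrisation $\tilde u=\psi(u)$, independent of $x,y$, with $h_m(u)\psi'(u)=q_m(\psi(u))$ for polynomials $q_m$ of degree $\le d-1\le 2$. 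Applying it turns the $\partial_u$-component of $\hat X_i$ into $f_i\,\psi'(u)=\sum_m p^i_m(x,y)\,q_m(\tilde u)$, a polynomial of degree at most two, which is the asserted form $\alpha_i+\beta_i\tilde u+\gamma_i\tilde u^2$; and since $\psi$ is the same on every fibre and $W_a=W$, each $\varphi_a(\hat{\mathfrak{st}}_a)$ is carried simultaneously to the normal form (\ref{classification1D}).

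The main obstacle is exactly the simultaneity. Normalising a single fibre is immediate from the one-dimensional classification, but a priori the normalising map depends on the base point, which would ruin the polynomial structure across fibres. The key that removes this dependence is the observation that, once the transversal fields are made $u$-trivial, their flows identify all the fibre algebras $W_a$ with one fixed space of functions of $u$; after that, everything reduces to routine bookkeeping.
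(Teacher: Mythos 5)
Your proof is correct, and it reaches the conclusion by a genuinely different mechanism than the paper, although both hinge on the same preliminary reductions (Lemma \ref{lemma} plus Example \ref{simplify} to arrange $f_1\equiv f_2\equiv 0$). The paper argues infinitesimally: it first normalizes the single central fibre by a change $u\mapsto A(u)$, then extracts from the brackets $[\hat X_1,\hat X_i]$ and $[\hat X_2,\hat X_i]$ the linear system $\partial_x f_i=c^k_{1i}f_k$, $\partial_y f_i=\tilde c^k_{2i}(x)f_k$, differentiates it three times in $u$, and uses the uniqueness theorem for linear ODE systems to propagate $f_i'''=0$ (and, in the lower-dimensional cases, $f_i''=0$ or $f_i'=0$) from the central fibre to all of $U$; constancy of $\dim\varphi_a(\hat{\mathfrak{st}}_a)$ then gives the normal form everywhere. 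You instead integrate: the flows of the $u$-trivialized fields $\hat X_1,\hat X_2$ conjugate stabilizers into one another while acting as the identity on the fibre coordinate, so all the fibre algebras $W_a$ are literally the same space of functions of $u$, and a single reparametrization $\psi(u)$ normalizes every fibre at once. Your route makes the simultaneity conceptually transparent and disposes of the normal-form claim without case analysis, at the cost of invoking the (standard, but heavier) fact that the flow of an element of a finite-dimensional Lie algebra of analytic vector fields preserves the algebra, acting by $e^{s\,\mathrm{ad}}$; the paper's route is more elementary, needing only structure constants and ODE uniqueness, and it obtains the sharper statement that the polynomial degree of the $f_i$ matches $\dim\mathfrak h$ as a byproduct. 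Two small points you should patch: analyticity of the coefficients $p^i_m$ deserves a line (evaluate at $d$ values $u_1,\dots,u_d$ with $\det\bigl(h_m(u_j)\bigr)\neq 0$ and solve the resulting linear system), and the reverse inclusion $W_a\subseteq W_{a_0}$ is most cleanly obtained from the inverse flow, though appealing to the constancy of $\dim\varphi_a(\hat{\mathfrak{st}}_a)$ established before the theorem also works.
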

\begin{proof}
		Let $p \in \mathbb C^2 \times \mathbb C$ be an arbitrary point, $V$ an open set containing $p$, and $(V,(x,y,u))$ a coordinate chart centered at $p$. By lemma \ref{lemma} we may assume that $X_1=\partial_x$ and either $X_2=\partial_y$ or $X_2=x\partial_x+\partial_y$ and by example \ref{simplify} we may set $f_1 \equiv 0 \equiv f_2$.   We choose a basis of $\mathfrak g$ such that $\mathfrak{st}_0=\Span{X_3,...,X_r}$. 
		
	Since $\varphi_0(\hat{\mathfrak{st}_0})$ is a transitive action on the line, we may in addition make a local coordinate change $u \mapsto A(u)$ on $U \subset V$ containing $0$ so that $\varphi_0(\hat{\mathfrak{st}_0})$ is of the form $\Span{\partial_u}$, $\Span{\partial_u, u \partial_u}$ or $\Span{\partial_u, u \partial_u, u^2 \partial_u}$. Then for $i=3,...,r$, the functions $f_i$ have the property \[f_i(0,0,u)= \tilde \alpha_i+\tilde \beta_i u+\tilde \gamma_i u^2.\]  We use the commutation relations of $\hat{\mathfrak g}$ to show that $f_i(x,y,u)$ will take this form for any $(x,y,u) \in U$. 
	
	If $[X_j,X_i]=c_{ji}^k X_k$ are the commutation relations for $\mathfrak g$, then the lift of $\mathfrak g$ obeys the same relations: $[\hat X_j, \hat X_i] = c_{ji}^k \hat X^k$. Thus 
	\[ [\hat X_1, \hat X_i] = [X_1,X_i]+X_1(f_i) \partial_u= c_{1i}^k X_k+X_1(f_i)\partial_u \] which implies that $X_1(f_i)=c_{1i}^k f_k$. In the same manner we get the equations $X_2(f_i)=c_{2i}^k f_k$. We can rewrite the equations as
	\[\partial_x(f_i)=c_{1i}^k f_k, \qquad \partial_y(f_i)=\tilde c_{2i}^k(x) f_k.   \]
	The coefficients $\tilde c_{2i}^k(x)$ depend on whether $\Span{X_1,X_2}$ is abelian or not, but in any case they are indepedent of $u$. We differentiate these equations three times with respect to $u$ (denoted by $'$):
	\[\partial_x(f_i''')=c_{1i}^k f_k''', \qquad \partial_y(f_i''')=\tilde c_{2i}^k(x) f_k'''   \] By the above assumption we have $f_i'''(0,0,u)=0$, and by the uniqueness theorem for systems of linear ODEs it follows that for every $(x,y,u) \in U$ we have $f_i'''(x,y,u)=0$, and therefore
	\begin{equation}
	f_i(x,y,u)=  \alpha_i(x,y)+ \beta_i(x,y) u+ \gamma_i(x,y) u^2. \label{normalform}
	\end{equation}
	Note also that if $f_i''$ (or $f_i'$) vanish on $(0,0,u)$, we may assume $\gamma_i\equiv 0$ (or $\gamma_i\equiv 0$ and $\beta_i \equiv 0$) for every $i$. The last statement  of the theorem follows by the fact that $\dim \varphi_a(\hat{\mathfrak{st}_{a}})$ is the same for every $a \in U$. 
\end{proof}

\begin{definition}
	We say that the lift $\hat{\mathfrak{g}}$ of $\mathfrak g \subset \vf{\mathbb C^2}$ is metric, affine or projective if $\mathfrak h$ is one, two or three dimensional, respectively, and $\varphi_a(\hat{\mathfrak{st}_{a}})$ is of normal form (\ref{classification1D}) at every point $a \in \mathbb C^2$. 
\end{definition}

By theorem \ref{main} all lifts are locally equivalent to one of these three types, so from now on we will restrict to such lifts. This simplifies our computations. Geometrically, we may think about this lifting as choosing a structure on the fiber, namely metric, affine or projective, and requiring the lift to preserve this structure. Another useful observation is that the properties of $\mathfrak{st}_a$ and $\mathfrak h$ are closely linked.

\begin{corollary} \label{quotient}
		 If $\mathfrak{st}_a$ is solvable, then there are no projective lifts.
		 If $\mathfrak{st}_a$ is abelian, then there are no projective or affine lifts. 
\end{corollary}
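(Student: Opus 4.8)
The plan is to exploit the fact that, as an abstract Lie algebra, $\mathfrak h$ is a quotient of $\mathfrak{st}_a$, together with the observation that both solvability and abelianness pass to quotients. The three possible models on the fiber have very different algebraic structure: the metric algebra $\Span{\partial_u}$ is one-dimensional abelian, the affine algebra $\Span{\partial_u, u\partial_u}$ is the two-dimensional non-abelian (but solvable) Lie algebra, and the projective algebra $\Span{\partial_u, u\partial_u, u^2\partial_u}$ is isomorphic to $sl_2$, which is simple and in particular not solvable. This structural dichotomy is what drives the statement.

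First I would record that $d\pi$ restricts to a Lie algebra isomorphism $\hat{\mathfrak g} \to \mathfrak g$: it is a homomorphism on projectable fields, it is surjective by the defining property $d\pi(\hat{\mathfrak g})=\mathfrak g$ of a lift, and it is injective precisely because $\ker(d\pi|_{\hat{\mathfrak g}})=\{0\}$. Restricting this isomorphism to the stabilizer subalgebra yields $\hat{\mathfrak{st}_a} \cong \mathfrak{st}_a$ as abstract Lie algebras. Next I would invoke the identification $\mathfrak h_a \cong \hat{\mathfrak{st}_a}/\ker(\varphi_a)$ already established in the text, which exhibits $\mathfrak h_a$, and hence the abstract algebra $\mathfrak h$, as a quotient of $\mathfrak{st}_a$.

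The argument then closes by contradiction. If $\mathfrak{st}_a$ is solvable, then its quotient $\mathfrak h$ is solvable, so $\mathfrak h$ cannot be isomorphic to $sl_2$; this rules out the projective type. If $\mathfrak{st}_a$ is abelian, then its quotient $\mathfrak h$ is abelian, so $\mathfrak h$ can be neither the non-abelian affine algebra nor $sl_2$; this rules out both the affine and projective types, leaving only the metric type.

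I do not expect a serious obstacle here, since the reasoning is purely structural once the quotient relationship is in place. The only step requiring a little care is the first one, namely confirming that a lift genuinely identifies $\hat{\mathfrak{st}_a}$ with $\mathfrak{st}_a$ as abstract Lie algebras; everything hinges on $\mathfrak h$ being a true quotient of $\mathfrak{st}_a$ rather than merely being related to it, and this is exactly what the injectivity condition $\ker(d\pi|_{\hat{\mathfrak g}})=\{0\}$ in the definition of a lift guarantees.
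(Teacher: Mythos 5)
Your proposal is correct and follows essentially the same route as the paper: both arguments identify $\mathfrak h$ as a homomorphic image (quotient) of $\mathfrak{st}_a$ via $\varphi_a$ and the isomorphism $\hat{\mathfrak{st}_a}\simeq\mathfrak{st}_a$ coming from $\ker(d\pi|_{\hat{\mathfrak g}})=\{0\}$, then use that solvability and abelianness pass to quotients, ruling out $sl_2$ (projective) and the non-abelian affine algebra respectively. The paper's proof is just a terser version of yours, leaving the identification $\hat{\mathfrak{st}_a}\simeq\mathfrak{st}_a$ and the structure of the three fiber algebras implicit.
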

\begin{proof}
The map $\varphi_a\colon \hat{\mathfrak{st}_a} \to \mathfrak h_a \simeq \mathfrak h$ is a Lie algebra homomorphism, and the image of a solvable (resp. abelian) Lie algebra is solvable (resp. abelian). 
\end{proof}
In particular, from Lie's classification it follows that only the primitive Lie algebras may have projective lifts.

\subsection{Coordinate transformations}
It is natural to consider two lifts to be equivalent if there exist a coordinate transformation on the fibers ($u \mapsto A(x,y,u)$), taking one to the other. 
We consider metric lifts up to translations $u \mapsto u+A(x,y)$, affine lifts up to affine transformations $u \mapsto A(x,y) u+B(x,y)$ and projective lifts up to projective transformations $u \mapsto \frac{A(x,y) u+B(x,y)}{C(x,y) u+D(x,y)}$. The following example shows the general procedure we use for finding lifts.

\begin{example}\label{g6}
	Consider the Lie algebra $\mathfrak{g}_{6}$ which is spanned by vector fields
	\[ X_1=\partial_x,\quad  X_2=\partial_y,\quad X_3= y \partial_y,\quad  X_4= y^2 \partial_y.\] Since the stabilizer of $0$ is solvable, we may by corollary \ref{quotient} assume that the generators of a lift $\hat{\mathfrak g}_{6}$ is of the form $\hat{X}_i = X_i+f_i \partial_u$,  where $f_i$ are affine functions in $u$. All lifts are either metric og affine. 
	
	By example \ref{simplify} we may assume that $f_1 \equiv 0 \equiv f_2$ after making an affine change of coordinates (or a translation if we consider metric lifts). The type of coordinate transformation was not specified in the example, but it is clear that the PDE in the example can be solved within our framework of metric and affine lifts, respectively.
	
	The commutation relations $[X_1,X_3]=0,[X_2,X_3]=X_2$ imply that $f_3$ is a function of $u$ alone. The commutation relations $[X_1,X_4]=0,[X_2,X_4]=2 X_3,[X_3,X_4]=X_4$ result in the differential equations 
	\[(f_4)_x=0, \quad (f_4)_y=2 f_3, \quad y (f_4)_y+f_3 (f_4)_u-f_4 (f_3)_u=f_4.\]
	The first two equations give $f_4=2yf_3(u)+b(u)$.
	After inserting this into the third equation it simplifies to $f_3 b_u-b (f_3)_u=b$.
	
	Since the lift is either metric or affine, we may assume that $f_3=A_0 + A_1 u$ and $b=B_0+B_1 u$. Then the equation above results in $B_1=0$ and $B_0 A_1=-B_0$.
	If $B_0=0$ we get transitive lifts only when $A_1=0$:
	\begin{equation*}
	\partial_x,\quad  \partial_y,\quad y\partial_y+A_0\partial_u,\quad y^2 \partial_y+2A_0 y\partial_u.
	\end{equation*}
	These are metric lifts. If $A_1=-1$ we get the affine lift
	\begin{equation*}
	\partial_x, \quad \partial_y,\quad y \partial_y-u \partial_u,\quad y^2 \partial_y+(1-2yu)\partial_u 
	\end{equation*}
	where $A_0$ and $B_0$ have been normalized by a translation and scaling, respectively. 	
\end{example}

\begin{remark}
	The family of metric lifts is also invariant under transformations of the form $u \mapsto C u+A(x,y)$, where $C$ is constant. However, we would like to restrict to $C=1$. A consequence of this choice is that we get a correspondence between metric lifts and Lie algebra cohomology which will be discussed in section \ref{cohomology}. The same cohomology spaces are treated in \cite{Olver92} where they are used for classifying Lie algebras of differential operators on $\mathbb C^2$. 
	
	We also get a correspondence between metric lifts and ``linear lifts'', whose vector fields act as infinitesimal scaling transformations in fibers. Using the notation above they take the form $\hat X=X+f(x,y) u \partial_u$.
	They make up an important type of lifts, but we do not consider them here due to their intransitivity. Since the transformation $u \mapsto \exp(u)$ takes metric lifts to linear lifts, the theories of these two types of lifts are analogous (given that we allow the right coordinate transformations).  This makes many of the results in this paper applicable to linear lifts as well. 
	As an example the classification of linear lifts under linear transformations ($u \mapsto u A(x,y)$), will be similar to that of metric lifts under translations ($u \mapsto u+A(x,y)$).
\end{remark}

\section{List of lifts} \label{list}
This section contains the list of lifts of the Lie algebras from section \ref{classification2D}  on $\pi\colon \mathbb C^2 \times \mathbb C \to \mathbb C^2$. For a Lie algebra $\mathfrak g \subset \vf{\mathbb C^2}$ we will denote by $\hat{\mathfrak{g}}^m,\hat{\mathfrak{g}}^a,\hat{\mathfrak{g}}^p$ the metric, affine and projective lifts, respectively.

\begin{theorem}
	The following list contains all metric, affine and projective lifts of the Lie algebras from Lie's classification in section \ref{classification2D}. 
\end{theorem}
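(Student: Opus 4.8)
The plan is to treat the classification as a large but finite case analysis, organized by the structural reductions already established. By Theorem \ref{main} every transitive lift of each $\mathfrak g_i$ is locally equivalent to a metric, affine, or projective lift, so it suffices to determine, for each $\mathfrak g_i$ in the list of section \ref{classification2D}, the lift functions $f_i(x,y,u) = \alpha_i(x,y) + \beta_i(x,y)u + \gamma_i(x,y)u^2$ of normal form (\ref{normalform}), up to the admissible fiber transformations (translations, affine, or projective respectively). Corollary \ref{quotient} prunes the tree immediately: whenever $\mathfrak{st}_a$ is solvable there are no projective lifts, and when it is abelian only metric lifts survive, so for many $\mathfrak g_i$ only one or two columns of the table need to be computed.

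The computation for each $\mathfrak g_i$ follows the procedure illustrated in Example \ref{g6}. First I would apply Lemma \ref{lemma} and Example \ref{simplify} to normalize the two distinguished generators (typically $\partial_x$ and $\partial_y$, or $\partial_x$ and $x\partial_x+\partial_y$) so that $f_1 \equiv f_2 \equiv 0$. The commutation relations $[\hat X_j,\hat X_i] = c_{ji}^k \hat X_k$ then become the linear system $X_1(f_i) = c_{1i}^k f_k$, $X_2(f_i) = c_{2i}^k f_k$ from the proof of Theorem \ref{main}, supplemented by the remaining relations among $X_3,\dots,X_r$, which are nonlinear in the $f_i$ because of the $\partial_u$-components. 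Working in the polynomial ansatz reduces each such relation to algebraic constraints on the coefficient functions $\alpha_i,\beta_i,\gamma_i$, and these are solved generation by generation along the lower central or derived series of $\mathfrak g$, just as $f_3$ was pinned down before $f_4$ in the example. The admissible fiber transformations are then used to normalize the surviving free constants.

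The principal obstacle is not any single calculation but the sheer number of cases, compounded by the fact that several families ($\mathfrak g_4,\mathfrak g_5,\mathfrak g_8,\mathfrak g_9,\mathfrak g_{10},\mathfrak g_{13},\mathfrak g_{14}$) carry integer or complex parameters ($r$, $m_i$, $\alpha_i$), so each must be handled uniformly in the parameter rather than case by case. Within these families the nonlinear relations can force the parameters into arithmetic conditions (for instance resonance-type conditions among the $\alpha_i$, or special values of $r$), and isolating exactly which parameter values admit nontrivial $\beta_i$ or $\gamma_i$ is the delicate part. I would therefore organize the work so that the linear constraints fix the $x,y$-dependence of the $\alpha_i,\beta_i,\gamma_i$ completely, leaving only finitely many constants, and then extract the algebraic variety these constants lie on from the remaining quadratic relations.

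Finally, completeness must be argued: since Theorem \ref{main} guarantees every transitive lift is equivalent to one of the three normal-form types, and since within each type the above procedure produces all solutions of the defining equations up to the admissible transformations, the tabulated lifts exhaust all transitive lifts of each $\mathfrak g_i$. The theorem is thus established by verifying that the entries of the list in this section coincide precisely with the solution sets obtained, which I would present as the table itself together with the per-algebra computations. The main result then follows by collecting these computations across all sixteen families.
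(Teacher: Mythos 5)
Your proposal follows essentially the same route as the paper: the paper's proof is precisely the direct computation you describe---reduce to the three types via Theorem \ref{main}, prune with Corollary \ref{quotient}, normalize generators via Lemma \ref{lemma} and Example \ref{simplify}, then solve the resulting equations family by family and normalize constants by the admissible fiber transformations (the paper relegates the actual case-by-case computations to an ancillary file). Your outline, including the uniform treatment of the parametric families, matches the paper's algorithm and is correct.
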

\begin{align*}
\hat{\mathfrak{g}}_1^m&=\Span{\partial_x, \partial_y, x \partial_y, x \partial_x-y \partial_y, y \partial_x, x \partial_x+y \partial_y+2C \partial_u, \\ 
	&\qquad x^2 \partial_x+xy \partial_y+3 C x \partial_u, xy \partial_x+y^2 \partial_y + 3C y \partial_u}\\
\hat{\mathfrak{g}}_1^p & =\Span{\partial_x, \partial_y, x \partial_y+\partial_u, x \partial_x-y \partial_y-2u \partial_u, y \partial_x - u^2 \partial_u, x \partial_x+y \partial_y, \\
	& \qquad x^2 \partial_x+xy \partial_y+(y-xu) \partial_u, xy \partial_x+y^2 \partial_y+u(y-xu)\partial_u}\\
\hat{\mathfrak{g}}_2^m&= \Span{\partial_x, \partial_y, x \partial_y, x \partial_x-y\partial_y, y \partial_x, x \partial_x+y \partial_y+C\partial_u} \\
\hat{\mathfrak{g}}_2^p&=  \Span{\partial_x, \partial_y, x \partial_y+\partial_u, x \partial_x-y \partial_y-2u\partial_u, y \partial_x-u^2 \partial_u, x \partial_x+y \partial_y} \\
\hat{\mathfrak{g}}_3^p &=  \Span{\partial_x, \partial_y, x \partial_y+\partial_u, x \partial_x-y \partial_y-2u \partial_u, y \partial_x-u^2 \partial_u }\\
\hat{\mathfrak{g}}_4^m&=
\Span{\partial_x, x^i e^{\alpha_j x} \partial_y+ e^{\alpha_j x} \left( \sum_{k=0}^i \tbinom{i}{k} C_{j,k} x^{i-k} \right) \partial_u \mid C_{1,0}=0 }  \\
\hat{\mathfrak{g}}_5^m &= \Span{\partial_x,y\partial_y+C\partial_u, x^i e^{\alpha_j x} \partial_y}\\
\hat{\mathfrak{g}}_5^a &= \Span{\partial_x,y\partial_y+u \partial_u, x^i e^{\alpha_j x} \partial_y+ e^{\alpha_j x} \left( \sum_{k=0}^i \tbinom{i}{k} C_{j,k} x^{i-k} \right) \partial_u\mid C_{1,0}=0} \\
\hat{\mathfrak{g}}_6^m&= \Span{\partial_x, \partial_y, y\partial_y +C\partial_u, y^2 \partial_y+2Cy\partial_u}\\
\hat{\mathfrak{g}}_6^a&= \Span{\partial_x, \partial_y, y \partial_y-u \partial_u,y^2 \partial_y+(1-2yu)\partial_u}\\
\hat{\mathfrak{g}}_7^m&= \Span{\partial_x, \partial_y, x\partial_x +C\partial_u, x^2 \partial_x+x \partial_y+2C x\partial_u}\\
\hat{\mathfrak{g}}_7^a&= \Span{\partial_x, \partial_y, x \partial_x-u \partial_u,x^2 \partial_x+x \partial_y+(1-2xu)\partial_u}\\
\hat{\mathfrak{g}}_8^m &= \Span{\partial_x, \partial_y, x\partial_x+\alpha y \partial_y+ A\partial_u, x \partial_y,...,x^{s-1} \partial_y,\\ & \qquad x^{s+i} \partial_y + \tbinom{s+i}{s} B x^i \partial_u \mid i=0,...,r-3-s},  \\
& \text{where } B=0 \text{ unless } \alpha=s\\
\hat{\mathfrak{g}}_8^a &= \Span{\partial_x, \partial_y, x \partial_x+\alpha y\partial_y+ (\alpha-s)u\partial_u, x \partial_y,...,x^{s-1} \partial_y, \\ & \qquad x^{s+i} \partial_y +\tbinom{s+i}{s} x^i \partial_u \mid i=0,...,r-3-s, \quad \alpha \neq s}\\
\hat{\mathfrak{g}}_9^m &= \Span{\partial_x,  \partial_y, x \partial_x+ ((r-2)y+x^{r-2}) \partial_y+C \partial_u,  x \partial_y, ..., x^{r-3} \partial_y} \\
\hat{\mathfrak{g}}_9^a &= \Span{\partial_x,   \partial_y,  
	x \partial_x+ ((r-2)y+x^{r-2} )\partial_y+ \left(\tbinom{r-2}{s} x^{r-s-2}+(r-s-2) u\right) \partial_u,\\ 
	& \qquad x \partial_y, ..., x^{s-1} \partial_y, x^{s+i} \partial_y + \tbinom{s+i}{s} x^i \partial_u \mid  i=0,...,r-3-s}\\
\hat{\mathfrak{g}}_{10}^m &= \Span{\partial_x,  \partial_y,  x \partial_x+ A \partial_u,  y \partial_y+B \partial_u,  x \partial_y, ..., x^{r-4} \partial_y}\\
\hat{\mathfrak{g}}_{10}^a &= \Span{\partial_x,  \partial_y,  x \partial_x-su \partial_u,  y \partial_y+u\partial_u,  x \partial_y,  ...,  x^{s-1} \partial_y, \\ &\qquad x^{s+i} \partial_y+ \tbinom{s+i}{s} x^i \partial_u \mid  i=0,...,r-4-s}\\
\hat{\mathfrak{g}}_{11}^m &=\Span{\partial_x,  \partial_y,  x \partial_x + A \partial_u,  y \partial_y+B\partial_u,  y^2 \partial_y+2By \partial_u}\\
\hat{\mathfrak{g}}_{11}^a &=\Span{\partial_x,  \partial_y,  x \partial_x ,  y \partial_y-u\partial_u,  y^2 \partial_y+(1-2yu) \partial_u}\\
\hat{\mathfrak{g}}_{12}^m&= \Span{\partial_x,  \partial_y,  x \partial_x+A \partial_u,  y \partial_y+B\partial_u,x^2 \partial_x+2Ax \partial_u , y^2 \partial_y+2By \partial_u}\\
\hat{\mathfrak{g}}_{12}^{a1} &=\Span{\partial_x,  \partial_y,  x \partial_x- u \partial_u ,  y \partial_y,x^2 \partial_x+(1-2xu) \partial_u,  y^2 \partial_y}\\
\end{align*}
\begin{align*}
\hat{\mathfrak{g}}_{12}^{a2} &=\Span{\partial_x,  \partial_y,  x \partial_x ,  y \partial_y- u \partial_u,x^2 \partial_x,  y^2 \partial_y+(1-2yu) \partial_u}\\
\hat{\mathfrak{g}}_{13}^{m1} &= \Span{\partial_x, \partial_y,  x \partial_x+ y \partial_y+A \partial_u, x \partial_y+B \partial_u, x^2 \partial_y+2B x \partial_u,   \\ & \qquad x^2 \partial_x+2 xy \partial_y+(2x A+2yB)\partial_u }\\
\hat{\mathfrak{g}}_{13}^{m2} &= \Span{\partial_x, \partial_y, x \partial_x+\tfrac{r-4}{2} y \partial_y+C\partial_u, x \partial_y,...,x^{r-4} \partial_y, \\ & \qquad x^2 \partial_x+(r-4) xy \partial_y+2Cx \partial_u}\\
\hat{\mathfrak{g}}_{13}^{a1} &= \Span{\partial_x, \partial_y, x \partial_x+\tfrac{r-4}{2} y \partial_y-u\partial_u,  x \partial_y,...,x^{r-4} \partial_y,\\ & \qquad x^2 \partial_x+(r-4) xy \partial_y+(1-2xu) \partial_u}\\
\hat{\mathfrak{g}}_{13}^{a2} &= \Span{\partial_x,  \partial_y,  x^2 \partial_x+(r-4)xy \partial_y+(x (r-6) u+(r-4)y) \partial_u, \\  &\qquad x \partial_x+\tfrac{r-4}{2} y \partial_y+\tfrac{r-6}{2} u \partial_u,  x^i \partial_y+i x^{i-1} \partial_u \mid i=1,...,r-4 }\\
\hat{\mathfrak{g}}_{14}^{m} &= \Span{\partial_x,  \partial_y,  x \partial_x+A\partial_u,  y \partial_y+B \partial_u, x \partial_y, ..., x^{r-5} \partial_y,\\
	& \qquad x^2 \partial_x+(r-5) x y \partial_y+(2A+(r-5) B) x \partial_u} \\
\hat{\mathfrak{g}}_{14}^{a1} &= \Span{\partial_x,  \partial_y,  x \partial_x- u\partial_u,  y \partial_y,
	x \partial_y,...,x^{r-5} \partial_y, \\ & \qquad x^2 \partial_x+(r-5) x y \partial_y+(1-2 xu) \partial_u}\\
\hat{\mathfrak{g}}_{14}^{a2} &= \Span{	\partial_x,  \partial_y,  x^2 \partial_x+(r-5) x y \partial_y+((r-7) x u+(r-5) y) \partial_u, \\
	& \qquad x \partial_x- u\partial_u,  y \partial_y+u \partial_u,  x^i \partial_y+ i x^{i-1}  \partial_u \mid i=1,...,r-5}\\
\hat{\mathfrak{g}}_{15}^m &= \Span{\partial_x, x \partial_x + \partial_y,  x^2 \partial_x+2x \partial_y+C e^y\partial_u }\\
\hat{\mathfrak{g}}_{16}^m &= \Span{\partial_x,  x \partial_x-y\partial_y+C \partial_u,  x^2 \partial_x+(1-2xy) \partial_y+ 2C x\partial_u}
\end{align*}

The proof of theorem \ref{list} is a direct computation following the algorithm described above. The computations are not reproduced here, beyond example \ref{g6}, but they can be found in the ancillary file to the arXiv version of this paper. 

All capital letters in the list denote complex constants. For the metric lifts, one of the constants can always be set equal to $1$ if we allow to rescale $u$. In the affine lift $\hat{\mathfrak g}_5^a$ one of the constants must be nonzero in order for the lift to be transitive, and it can be set equal to $1$ by a scaling transformation. Notice also that even though ${\mathfrak{g}}_{15}$ is not locally equivalent to ${\mathfrak{g}}_{16}$, their lifts are locally equivalent. In addition the two affine lifts of $\mathfrak g_{12}$ are locally equivalent. 

Most of this list already exist in the literature. The lifts of the three primitive Lie algebras can be found in \cite{Transformationsgruppen3}. The first attempt to give a complete list of imprimitive Lie algebras of vector fields on $\mathbb C^3$ was done by Amaldi in \cite{Amaldi1,Amaldi2}. Most of the Lie algebras we have found is contained in Amaldi's list of Lie algebras of ``type A'', but some of our lifts are missing. Examples are $\hat{\mathfrak{g}}_{10}^a,\hat{\mathfrak{g}}_{14}^{m},\hat{\mathfrak{g}}_{14}^{a1}$ and $\hat{\mathfrak{g}}_{8}^a$ with general $\alpha$ and $B=0$. There is in error in the Lie algebra corresponding to $\hat{\mathfrak{g}}_{14}^{a1}$ which was also noticed in \cite{Hillgarter1, Hillgarter2}. The lifts of nonsolvable Lie algebras are contained in \cite{Doubrov}, and the case of metric lifts was also considered in \cite{MasterThesis}.

\section{Metric lifts and Lie algebra cohomology} \label{cohomology}
We conclude this treatment by showing that there is a one-to-one correspondence between the metric lifts of $\mathfrak g \subset \vf{\mathbb C^2}$ and the Lie algebra cohomology group $H^1(\mathfrak g, C^\omega (\mathbb C^2))$. The main result is analogous to \cite[Theorem 2]{Olver92}. 

A metric lift of a Lie algebra $\mathfrak g \subset \vf{\mathbb C^2}$ is given by a $C^\omega(\mathbb C^2)$-valued one-form $\psi$ on $\mathfrak g$. For vector fields  $X,Y \in \mathfrak g$ lifted to  $\hat X = X + \psi_X \partial_u$ and $\hat Y = X + \psi_Y \partial_u$  we have
\begin{equation}
[\hat X,\hat Y] = [X+\psi_X \partial_u, Y+\psi_Y \partial_u] = [X,Y] + (X(\psi_Y)-Y(\psi_X))\partial_u. \label{xhatyhat}
\end{equation}
Consider the first terms of the Chevalley-Eilenberg complex
\begin{equation*}
0 \longrightarrow C^\omega(\mathbb C^2) \overset{d}{\longrightarrow} \mathfrak g^* \otimes C^\omega(\mathbb C^2) \overset{d}{\longrightarrow} \Lambda^2 \mathfrak g^* \otimes C^\omega (\mathbb C^2)
\end{equation*}
where the differential $d$ is defined by 
\begin{align*}
df(X) &= X(f), \quad f \in C^\omega(\mathbb C^2) \\
d\psi(X,Y) &= X(\psi_Y)-Y(\psi_X) - \psi_{[X,Y]}, \quad \psi \in \mathfrak g^* \otimes C^\omega(\mathbb C^2).
\end{align*}
This complex depends not only on the abstract Lie algebra, but also on its realization as a Lie algebra of vector fields.  It is clear from (\ref{xhatyhat}) that $\psi \in \mathfrak g^* \otimes C^\omega(\mathbb C^2)$ corresponds to a metric lift if and only if $d \psi =0$. 

Two metric lifts are equivalent if there exists a biholomorphism \[\phi\colon (x,y,u) \mapsto (x,y,u-U(x,y))\] on $\mathbb C^2 \times \mathbb C$ that brings one to the other. A lift of $X$  transforms according to
\begin{equation*}
d \phi \colon X + \psi_X \partial_u \mapsto X + (\psi_X - dU(X))\partial_u
\end{equation*}
which shows that two lifts are equivalent if the difference between their defining one-forms is given by $dU$ for some  $U\in C^\omega(\mathbb C^2)$. Thus we have the following theorem, relating the cohomology space
\begin{equation*}
H^1(\mathfrak g, C^\omega(\mathbb C^2)) = \{\psi \in\mathfrak g^* \otimes C^\omega(\mathbb C^2) \mid d \psi =0\}/\{dU \mid U \in C^\omega(\mathbb C^2)\},
\end{equation*}
to the space of metric lifts.
\begin{theorem} \label{cohomologytheorem}
	There is a one-to-one correspondence between the space of metric lifts of the Lie algebra $\mathfrak g \subset \vf{\mathbb C^2}$ (up to equivalence) and the cohomology space $H^1(\mathfrak g, C^\omega (\mathbb C^2))$. 
\end{theorem}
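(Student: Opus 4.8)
The plan is to exhibit an explicit bijection at the level of representatives and then show it descends to the claimed correspondence on equivalence classes. To a one-form $\psi \in \mathfrak g^* \otimes C^\omega(\mathbb C^2)$ I associate the candidate lift $\hat{\mathfrak g}_\psi = \Span{X + \psi_X \partial_u \mid X \in \mathfrak g}$. By Theorem \ref{main} every metric lift arises in exactly this way, with fiber coefficients $\psi_X = \alpha_X(x,y)$ independent of $u$; hence the assignment $\psi \mapsto \hat{\mathfrak g}_\psi$ is onto the set of all metric lifts, and the whole argument reduces to settling two points: (i) which $\psi$ yield genuine lifts, and (ii) when two of them are equivalent.

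For (i) I would show that $\psi$ yields a lift if and only if $d\psi = 0$. The map $X \mapsto \hat X = X + \psi_X \partial_u$ is manifestly linear in $X$ and has trivial kernel, since $X \neq 0$ forces $\hat X \neq 0$; thus $\ker(d\pi|_{\hat{\mathfrak g}_\psi}) = \{0\}$ automatically, and the only remaining requirement for $\hat{\mathfrak g}_\psi$ to be a lift with the structure constants of $\mathfrak g$ is that $X \mapsto \hat X$ be a Lie algebra homomorphism, i.e. $[\hat X, \hat Y] = \widehat{[X,Y]}$ for all $X,Y$. Comparing $\partial_u$-components via (\ref{xhatyhat}) and the definition of $d$ turns this into $X(\psi_Y) - Y(\psi_X) = \psi_{[X,Y]}$, which is exactly $d\psi(X,Y) = 0$. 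By bilinearity and antisymmetry it suffices to check this on a basis, so the cocycle space $Z^1 = \{\psi \mid d\psi = 0\}$ is in bijection with the set of metric lifts.

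For (ii) I would use the transformation law already recorded: a fiber translation $\phi \colon (x,y,u)\mapsto (x,y,u-U(x,y))$ sends the lift defined by $\psi$ to the one defined by $\psi - dU$. Consequently two metric lifts $\psi,\psi'$ are equivalent precisely when $\psi - \psi' = dU$ for some $U \in C^\omega(\mathbb C^2)$, i.e. when they are cohomologous. Combining (i) and (ii), the bijection $\psi \mapsto \hat{\mathfrak g}_\psi$ between $Z^1$ and the set of metric lifts intertwines the relation ``cohomologous'' with the relation ``equivalent'', and therefore descends to a bijection $H^1(\mathfrak g, C^\omega(\mathbb C^2)) = Z^1/B^1 \to \{\text{metric lifts}\}/\!\sim$, as claimed.

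The step I expect to demand the most care is pinning down that the geometric equivalence relation matches the coboundary relation exactly, neither coarser nor finer. The subtle issue is the choice of admissible coordinate changes: as noted in the remark preceding this section, metric lifts are also preserved by the scalings $u \mapsto Cu + A(x,y)$, and allowing $C \neq 1$ would quotient $H^1$ further by the induced scaling action and destroy the clean correspondence. I would therefore make explicit that equivalence of metric lifts here means equivalence under the translation subgroup $C = 1$ only, verify that this subgroup acts on one-forms precisely as $\psi \mapsto \psi - dU$, and confirm conversely that any translation relating two cocycles shifts $\psi$ by an exact form; this pins down injectivity on $H^1$ alongside the surjectivity established in (i).
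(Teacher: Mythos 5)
Your argument is correct and is essentially the paper's own proof: the paper likewise identifies a metric lift with a one-form $\psi$ (with coefficients independent of $u$, as guaranteed by Theorem \ref{main}), reads off the cocycle condition $d\psi=0$ from the bracket formula (\ref{xhatyhat}), and matches translation equivalence $u \mapsto u-U(x,y)$ with shifting $\psi$ by the coboundary $dU$, so the correspondence descends to $H^1(\mathfrak g, C^\omega(\mathbb C^2))$. Your closing point about restricting the admissible fiber transformations to pure translations ($C=1$) is precisely the normalization the paper fixes in its remark in Section \ref{lifts}, so the two treatments agree point for point.
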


The theorem gives a transparent interpretation of metric lifts, while also showing a way to compute $H^1(\mathfrak g, C^\omega (\mathbb C^2))$, through example \ref{g6}. This method is essentially the one that was used in \cite{Olver92}, where the same cohomologies were found. There they extended Lie's classification of Lie algebras of vector fields to Lie algebras of first order differential operators on $\mathbb C^2$, and part of this work is equivalent to our classification of metric lifts. 

Their results coincide with ours, with the  exceptions  $\mathfrak g_8$ which corresponds to case 5 and 20 in \cite{Olver92} and $\mathfrak g_{16}, \mathfrak g_{15}, \mathfrak g_7$ which correspond to cases 12, 13 and 14, respectively. For $\mathfrak g_8$ it seems like they have not considered the case corresponding to $\ker(d \pi|_{\hat{\mathfrak g}})={0}$ which is the only case we consider. The realizations used in \cite{Olver92} for cases 12, 13 and 14 have singular orbits, while their cohomologies are computed after restricting to subdomains, avoiding singular orbits. The cohomology is sensitive to choice of realization as Lie algebra of vector fields, and will in general change by restricting to a subdomain. The following example, based on realizations of $sl(2)$, illustrates this. 
\begin{example}
	The metric lift 
	\[\hat{\mathfrak{g}}_{16}^m = \Span{\partial_x,  x \partial_x-y\partial_y+C \partial_u,  x^2 \partial_x+(1-2xy) \partial_y+ 2C x\partial_y} \] is parametrized by a single constant, and thus $H^1(\mathfrak g_{16}, C^\omega (\mathbb C^2)) = \mathbb C$. Similarly, we see that $H^1(\mathfrak g_{15}, C^\omega (\mathbb C^2)) = \mathbb C$. 
	
	The  Lie algebra
	$  \tilde{\mathfrak{g}}_{16} = \Span{\partial_x, x \partial_x+ y \partial_y, x^2 \partial_x+y(2x+y)  \partial_y  }$ is related to \cite[case 12]{Olver92} by the transformation $y \mapsto x+y$. It is also
	is locally equivalent to $\mathfrak g_{16}$, but it has a singular one-dimensional orbit, $y=0$. Its metric lift is given by 
	\[ \Span{\partial_x,  x\partial_x+ y \partial_y+A \partial_u, x^2 \partial_x+y(2x+y)  \partial_y  +(2A x+B y) \partial_u}\]
	which implies that $H^1(\tilde{\mathfrak{g}}_{16}, C^\omega (\mathbb C^2)) = \mathbb C^2$.
	
	The Lie algebra $ \tilde{\mathfrak{g}}_{15}=\Span{y \partial_x, x \partial_y,x \partial_x-y\partial_y}$ is the standard representation on $\mathbb C^2$. If we split $C^\omega ( \mathbb C^2)= \oplus_{k=0}^\infty S^k (\mathbb C^2)^*$, then $H^1(\tilde{\mathfrak{g}}_{15},C^\omega(\mathbb C^2))= \oplus_{k=0}^\infty H^1(\tilde{\mathfrak{g}}_{15},S^k (\mathbb C^2)^*)$. By Whitehead's lemma, since $S^k(\mathbb C^2)^*$ is a finite-dimensional module over $\tilde{\mathfrak{g}}_{15}$, the cohomologies $H^1(\mathfrak g,S^k(\mathbb C^2)^*)$ vanish, and thus $H^1(\tilde{\mathfrak{g}}_{15},C^\omega(\mathbb C^2))=0$. Hence the cohomologies of the locally equivalent Lie algebras $\mathfrak{g}_{15}$ and $\tilde{\mathfrak{g}}_{15}$ are different. To summarize, we have two pairs of locally equivalent realizations of $sl(2)$, and their cohomologies are
	\begin{align*}
	H^1(\mathfrak g_{16}, C^\omega (\mathbb C^2)) = \mathbb C, \qquad &H^1(\tilde{\mathfrak{g}}_{16}, C^\omega (\mathbb C^2)) = \mathbb C^2, \\ H^1(\mathfrak g_{15}, C^\omega (\mathbb C^2)) = \mathbb C, \qquad &H^1(\tilde{\mathfrak{g}}_{15},C^\omega(\mathbb C^2))=0.
	\end{align*}
\end{example}

	The Lie algebra cohomologies considered in this paper are related to the relative invariants (and singular orbits) of the corresponding Lie algebras of vector fields \cite{FelsOlver}.  A consequence of \cite[Theorem~5.4]{FelsOlver} is that a locally transitive Lie algebra $\mathfrak g$ of vector fields has a scalar relative invariant if it has a nontrivial metric lift whose orbit-dimension is equal to that of $\mathfrak g$. The Lie algebra $\tilde{\mathfrak g}_{16}$ has two-dimensional orbits when $A=B$. Therefore there exists an absolute invariant, and it is given by $e^u/y^A$. The corresponding relative invariant of $\mathfrak g_{16}$ is $y^A$ and it defines the singular orbit $y=0$. 

\section*{Acknowledgements}
I would like to thank Boris Kruglikov for his invaluable guidance throughout this work.


\begin{thebibliography}{11}
	\footnotesize
	
	\bibitem{AH}
	M. Ackerman, R. Hermann, {\it Sophus Lie's 1880 transformation group paper\/}, Math Sci Press, Brookline, Mass. (1975).
	
	\bibitem{Amaldi1}
	U. Amaldi, {\it Contributo alla determinazione dei gruppi continui finiti dello spazio ordinario I\/}, Giornale di matematiche di Battaglini {\bf 39}, 273-316 (1901).
	
	\bibitem{Amaldi2}
	U. Amaldi, {\it Contributo alla determinazione dei gruppi continui finiti dello spazio ordinario II\/}, Giornale di matematiche di Battaglini {\bf 40}, 105-141 (1902).
	
	\bibitem{Campbell}
	J.E. Campbell, {\it Introductory treatise on Lie's theory of finite continuous transformation groups\/}, Clarendon (1903).
	
	\bibitem{Doubrov}
	B. Doubrov, {\it Three-dimensional homogeneous spaces with non-solvable transformation groups\/}, arXiv:1704.04393 (2017).
	
	\bibitem{Draisma}
	J. Draisma, {\it Transitive Lie algebras of Vector Fields: An Overview\/}, Qual. Theory Dyn. Syst. {\bf 11}, 39-60 (2012). https://doi.org/10.1007/s12346-011-0062-9
	
	\bibitem{FelsOlver}
	M. Fels, P.J. Olver, {\it On relative invariants\/}, Math. Ann. {\bf 308}, 701-732 (1997). https://doi.org/10.1007/s002080050097
	
	\bibitem{Olver92}
	A. Gonzáles-López, N. Kamran, P.J. Olver, {\it Lie Algebras of Differential Operators in Two Complex Variables\/}, American Journal of Mathematics {\bf 114}, 1163-1185 (1992). https://doi.org/10.2307/2374757
	
	\bibitem{Gorbatsevich}
	V.V. Gorbatsevich, {\it Classification of complex simply connected homogeneous spaces of dimension not greater than 2\/}, Russian Mathematics {\bf 57}, 12-25 (2013). https://doi.org/10.3103/S1066369X1303002X
	
	\bibitem{Onishchik}
	V.V. Gorbatsevich, A.L. Onishchik, E.B. Vinberg, {\it Lie Groups and Lie Algebras I: Foundations of Lie Theory Lie Transformation Groups\/}, Encyclopaedia of Mathematical Sciences {\bf 20}, Springer-Verlag (1993). https://doi.org/10.1007/978-3-642-57999-8 
	
	\bibitem{Hillgarter1}
	E. Hillgarter, {\it A contribution to the symmetry classification problem for second-order PDEs in $z(x,y)$\/}, IMA Journal of Applied Mathematics {\bf 71}, 210-231 (2016). https://doi.org/10.1093/imamat/hxh094
	
	\bibitem{Hillgarter2}
	E. Hillgarter, {\it A Contribution to the Symmetry Classification Problem for 2nd Order PDEs in one Dependent and two Independent Variables\/}, PhD thesis, Johannes Kepler University, Linz (2002). ftp://ftp.risc.uni-linz.ac.at/pub/techreports/2002/02-26.pdf
	
	\bibitem{Lie1880}
	S. Lie, {\it Theorie der Transformationsgruppen I\/}, Mathematische Annalen {\bf 16}, 441-528 (1880). 
	
	\bibitem{Transformationsgruppen3}
	S. Lie, F. Engel, {\it Theorie der Transformationsgruppen vol. III}, B.G. Teubner, Leipzig (1893). 
	\bibitem{MasterThesis}
	E. Schneider, {\it Symmetry transformation groups and differential invariants}, Master thesis, University of Tromsø (2014). http://hdl.handle.net/10037/7381
	
\end{thebibliography}
\end{document}